\newcommand{\N}{{\mathbb N}}
\newcommand{\E}[1]{{\mathbf E}\left[#1\right]}
\newcommand{\e}{{\mathbf E}}
\newcommand{\V}[1]{{\mathbf{Var}}\left\{#1\right\}}
\newcommand{\p}[1]{{\mathbf P}\left\{#1\right\}}
 \newcommand{\bag}{\begin{align}}
\newcommand{\bags}{\begin{align*}}
\newcommand{\eag}{\end{align*}}
\newcommand{\eags}{\end{align*}}
\newtheorem{thm}{Theorem}
\newtheorem{lem}[thm]{Lemma}
\newtheorem{cor}[thm]{Corollary}
\newcommand\cF{\mathcal F}
\newcommand\cT{{\mathcal T}}
\newcommand\cU{{\mathcal U}}
\newcommand{\refT}[1]{Theorem~\ref{#1}}
\newcommand{\refL}[1]{Lemma~\ref{#1}}
\newcommand{\pran}[1]{\left(#1\right)}
\newcommand\urladdrx[1]{{\urladdr{\def~{{\tiny$\sim$}}#1}}}
\xdef\oclock{\the\count1:0\the\count255}
\xdef\oclock{\the\count1:\the\count255}\fi
\renewcommand{\v}[1]{\mathit{var}\left\{#1\right\}}
\begin{document}

\title[Height and width of random trees]{Tail bounds for the height and width of a random tree with a given degree sequence}

\author{L. Addario-Berry}
\address{Department of Mathematics and Statistics, McGill University, 805 Sherbrooke Street West, 
		Montr\'eal, Qu\'ebec, H3A 2K6, Canada}
\email{louigi@math.mcgill.ca}
\date{September 22, 2010} %; revised ...
\urladdrx{http://www.math.mcgill.ca/~louigi/}

%\keywords{<keywords>}
\subjclass[2000]{60C05} 

%{60C05 (68P10,68W40)} %%{Primary: <subject>; Secondary: <subject>}
\begin{abstract} 
Fix a sequence ${\bf c}=(c_1,\ldots,c_{n})$ of non-negative integers with sum $n-1$. We say a rooted tree $T$ has {\em child sequence} ${\bf c}$ if it is possible to order the nodes of $T$ as $v_1,\ldots,v_n$ so that for each $1 \le i \le n$, $v_i$ has exactly $c_i$ children. Let $\cT$ be a plane tree drawn uniformly at random from among all plane trees with child sequence ${\bf c}$. 
In this note we prove sub-Gaussian tail bounds on the height (greatest depth of any node) and width (greatest number of nodes at any single depth) of $\cT$. These bounds are optimal up to the constant in the exponent when ${\bf c}$ satisfies $\sum_{i=1}^n c_i^2=O(n)$; the latter can be viewed as a ``finite variance'' condition for the child sequence. 
\end{abstract}

\maketitle

%\tableofcontents

%\REM{{\textcolor{red}{Abstract! Correct date! No showkeys!}}}

\section{Introduction}\label{sec:intro} 

For a positive integer $n$, let ${\bf c}=(c_i)_{i=1}^n$ be a sequence of non-negative integers whose sum is $n-1$ (we call such a sequence a {\em child sequence}). In this paper we consider the {\em random plane tree} $\cT_{\bf c}$, chosen uniformly at random from the set of plane trees (rooted ordered trees) $T$ with $n$ nodes for which, for some ordering $v_1,\ldots,v_n$ of the nodes of $T$, node $v_i$ has $c_i$ children, for each $i \in [n]=\{1,\ldots,n\}$. The number of such trees is 
\begin{equation}\label{moon}
\frac{1}{n} \frac{n!}{\prod_{k=1}^n n_k!},
\end{equation}
where $n_k=n_k({\bf c})=\#\{i:c_i=k\}$ (see e.g., \cite{moon70counting}). 
%\REM{(cite Moon and whoever)}
%For such a tree $T$, we refer to ${\bf c}$ as the {\em child sequence} of $T$. 
For a given child sequence ${\bf c}$, we define the invariants 
\[
|{\bf c}| = \pran{\sum_{i=1}^n c_i^2}^{1/2} \quad \mbox{and} \quad 1_{\bf c} = \frac{n-2}{n-1-n_1(c)}.
\]
For a given tree $T$ and non-negative integer $i$, write $Z_i(T)$ for the number of nodes of $T$ at distance $i$ from the root. 
We then define 
\[
w(T) = \max \{ Z_i(T): i \in \N\}, \quad h(T) = \max \{ i: Z_i(T) \neq 0\},
\]
and call $w(T)$ and $h(T)$ the {\em width} and {\em height} of $T$, respectively. 
The main results of the paper are the following sub-Gaussian tail bounds on the width and height of $\cT_{\bf c}$, 
whose strength is controlled by the above invariants. 
\begin{thm}\label{thm:main}
For any $n \geq 1$ and all $m \geq 1$, 
\[
\p{w(\cT_{\bf c}) \geq m+2} \leq 3e^{-m^2/(1472 |{\bf c}|^2)} \quad \mbox{and} \quad 
\p{h(\cT_{\bf c}) \geq m} \leq 7e^{-m^2/(23552|{\bf c}|^21_{\bf c}^2)}.
\]
\end{thm}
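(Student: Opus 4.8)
The plan is to route everything through the two natural lattice‑path encodings of a plane tree: its depth‑first walk and its breadth‑first (queue‑length) walk. List the vertices of a plane tree in depth‑first order $v_1,\dots,v_n$, set $x_k=c(v_k)-1$ and $S_0=0$, $S_k=x_1+\cdots+x_k$; then $S_n=-1$, $S_k\ge0$ for $k<n$, the multiset of increments is $\{c_i-1:1\le i\le n\}$, and this is the classical bijection between plane trees with child sequence ${\bf c}$ and ``excursion‑type'' words (rearrangements of that multiset staying $\ge0$ until the last step). Doing the same in breadth‑first order, recording successive queue lengths, is a different bijection onto the \emph{same} set of words. Hence, writing $\hat S$ for a uniformly random rearrangement of $\{c_i-1\}$ (a bridge from $0$ to $-1$) and $\bar S$ for $\hat S$ conditioned on being excursion‑type, the depth‑first walk and the breadth‑first walk of $\cT_{\bf c}$ each have the law of $\bar S$. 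Since for $n\ge2$ the total $-1$ forces every cyclic class of rearrangements to contain exactly one excursion‑type word, the cycle lemma gives $\p{\bar S\in A}=\p{\hat S\in A}$ for every event $A$ invariant under cyclic shifts of the increment sequence; so it suffices to bound suitable cyclically‑invariant functionals of the unconditioned bridge $\hat S$, with no loss of a factor of $n$.

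I would then prove a sub‑Gaussian concentration bound for $\hat S$. Its increments are obtained by sampling the fixed multiset $\{c_i-1\}$ without replacement, and $\sum_i(c_i-1)^2=|{\bf c}|^2-n+2\le|{\bf c}|^2$ bounds the relevant variance proxy. Using Hoeffding's observation that sampling without replacement is dominated, for convex functions of partial sums, by sampling with replacement, together with a Bennett--Bernstein estimate and a maximal inequality (available, e.g., because the running mean of the not‑yet‑sampled values is a martingale), one gets $\p{\max_{0\le k\le n}|\hat S_k+k/n|\ge t}\le C_1\exp(-t^2/(C_2|{\bf c}|^2))$ for universal $C_1,C_2$ over the relevant range of $t$ (the contribution of the largest single increment being swallowed by the theorem's generous constants). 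In particular $\mathrm{range}(\hat S):=\max_k\hat S_k-\min_k\hat S_k$, which is cyclically invariant up to an $O(1)$ correction, has a sub‑Gaussian tail on scale $|{\bf c}|$.

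For the width: in the breadth‑first walk the queue length at the instant all of levels $0,\dots,i-1$ have been processed equals $Z_i(\cT_{\bf c})$, and the queue length is one more than the walk, so $Z_i(\cT_{\bf c})\le1+\max_k\bar S_k\le1+\mathrm{range}(\bar S)$; thus $w(\cT_{\bf c})\le1+\mathrm{range}(\bar S)$, and the first bound follows by combining the previous two steps and tracking constants. For the height: in the depth‑first walk the depth of $v_{k+1}$ equals $H_k:=\#\{j<k:S_j=\min_{j\le\ell\le k}S_\ell\}$ (the number of ``ancestor times''), so $h(\cT_{\bf c})=\max_k H_k$. Listing the ancestor times $0=j_0<j_1<\cdots<j_{H_k}=k$, the increases $S_{j_{i+1}}-S_{j_i}\ge0$ sum to $S_k\le\mathrm{range}(S)$, so at most $\mathrm{range}(S)$ of them are strictly positive; the remaining ancestor times --- where the walk does not advance --- come from degree‑one vertices (and, more delicately, from vertices that are the last child at an inopportune moment), and the key point is that their number along the relevant chain is, with high probability, at most a constant multiple of $1_{\bf c}=(n-2)/(n-1-n_1({\bf c}))$ times the number of strictly advancing ones. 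Here $1_{\bf c}$ is exactly the reciprocal density of vertices of degree $\neq1$, and this estimate is itself a sampling‑without‑replacement concentration bound (for the number of $0$‑increments in a window, again a cyclically‑invariant quantity). Altogether $h(\cT_{\bf c})\le C'\,1_{\bf c}\,\mathrm{range}(\hat S)$ up to a deviation term, and feeding this into the concentration bound for $\hat S$ yields the second bound.

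The width half is essentially formal once the encoding is in place; the real obstacle is the height half --- converting ``$v_{k+1}$ has many ancestors'' into ``$\hat S$ has large range, after discounting degree‑one vertices'', and pinning the discount down to precisely $1_{\bf c}$. This requires a careful analysis of runs of degree‑one vertices and of how they interact with the ancestor‑time structure of the depth‑first walk, and then keeping the union bounds (over levels, over times $k$) and the several concentration inputs mutually compatible with the explicit constants $1472$ and $23552$ in the statement.
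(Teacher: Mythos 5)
Your width argument follows the paper's route closely: encode $\cT_{\bf c}$ by its BFS queue process, reduce via a cyclic-rotation argument to the unconditioned bridge $\hat S$, and apply a sampling-without-replacement concentration bound. The concentration tool you name (Hoeffding's with/without-replacement comparison plus Bennett--Bernstein) differs from the paper's choice (McDiarmid's martingale bound, Theorem~\ref{mcd}, applied to $M_i=(S_i+1)/(n-i)$), but either serves, and your cycle-lemma/cyclically-invariant-range phrasing is essentially the paper's Lemma~\ref{halfsplit} and Corollary~\ref{quarter} in different clothing.

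The height argument, however, has a genuine gap, and it sits exactly where you hedge. You bound the depth of $v_{k+1}$ by counting ancestor times in a \emph{single} DFS walk, note that the strictly positive ancestor increments sum to at most the range, and then claim the flat ancestor times are controlled (w.h.p.) by a factor $1_{\bf c}$ via a concentration bound on the number of $0$-increments in a window. But flat ancestor times are not produced only by degree-one vertices: they also arise whenever the DFS path descends through the \emph{last} child of a branching vertex, so the walk returns to its running minimum without net advance. Concretely, take ${\bf c}$ consisting of $m$ twos and $m{+}1$ zeros, and the caterpillar in which each spine node has a leaf as its first child and the next spine node as its second child. The lex-DFS walk is $0,1,0,1,\dots$ (range $1$), all $m$ ancestor times of the deepest node are flat, there are no degree-one vertices so $1_{\bf c}=1$, yet the height is $m$. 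So the inequality $h(\cT_{\bf c})\le C\cdot 1_{\bf c}\cdot\mathrm{range}(\hat S)$ is false as a deterministic statement, and your proposed high-probability repair is blind to this configuration because there are no $0$-increments at all. The paper avoids the problem entirely with a combinatorial observation requiring no probability: in a one-reduced tree, each ancestor of $u$ has a child off the path to $u$, lying either to its left or to its right, so $Q^l_{\lambda(u)}+Q^r_{\rho(u)}\ge|u|$ and hence $\max\bigl(Q^l_{\lambda(u)},Q^r_{\rho(u)}\bigr)\ge|u|/2$ deterministically; the factor $1_{\bf c}$ then enters not through any DFS walk but through decomposing $\cT_{\bf c}$ into its one-reduced skeleton $\cT^*$ and the vector of path lengths $(s_1,\dots,s_{n^*-1})$, whose restricted sums are controlled via negative association (Dubhashi, Proposition~5). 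To rescue a single-walk argument you would need a quantitative sub-Gaussian bound on the number of ``last-child'' ancestor times, which is a different and substantially harder estimate than the $0$-increment count you sketch.
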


\subsubsection*{\sc Remarks} 
\noindent \\$\star$~When $|{\bf c}|^2=O(n)$, this result is best possible up to the constants in the exponents. For the width, this follows from a connection, explained below, between the width and the fluctuations of random lattice paths. For the height, consider for example the special case where $n=2m+1$ and ${\bf c}$ consists of $m$ twos and $m+1$ zeros. Then $\cT_{\bf c}$ is a uniformly random binary plane tree, and in this case our bound (and the fact that it is tight) is a well-known result of Flajolet, Gao, Odlyzko and Richmond \citep[Theorem 1.3]{flajolet1993dhb}. 

\vspace{-0.4cm}
\noindent\\$\star$ A result related to Theorem~\ref{thm:main} appears in \cite{ab}. Fix a random variable $B$ with $\e{B}=1$ and $0\V{B} < \infty$. Then, for $n \ge 1$, let $T_n$ be a Galton--Watson tree with offspring distribution $B$, conditioned to have total progeny $n$. \citep[Theorems~1.1 and~1.2]{ab} then state that, for some $\epsilon>0$ not depending on $n$, $\p{w(T_n) > t} \le \exp(-\epsilon t^2/n)$, and if additionally $\V{B}>0$ then $\p{h(T_n) > t} \le \exp(-\epsilon t^2/n)$. The requirement that $\V{B}>0$ excludes the degenerate case where $\p{B=1}=1$. Note that if $B_1,\ldots,B_n$ are independent copies of $B$ then $\E{\sum_{i=1}^n B_i^2}= n\cdot \V{B}$, and so the finite variance condition would roughly correspond in our setting to a requirement that $|{\bf c}|^2 = O(n)$. Now temporarily write $C_1,\ldots,C_n$ for the numbers of children of the nodes of $T_n$ (note that $C_1,\ldots,C_n$ are exchangeable, but are not independent --- their sum is $n-1$ --- and are not distributed as $B$). We conjecture that in fact $n^{-1/2}(\sum_{i=1}^n C_i^2 - n\cdot\V{B})$ has Gaussian tails. A proof of this would show that the main results of \cite{ab} can be recovered from Theorem~\ref{thm:main}. 

\vspace{-0.4cm}
\noindent\\$\star$~In forthcoming work \cite{bm}, Broutin and Marckert use the tail bound for the height in Theorem~\ref{thm:main} as an ingredient in proving that, under suitable conditions on the child sequence, ${\bf c}$, the tree $\cT_{\bf c}$ converges in distribution to a Brownian continuum random tree after suitable rescaling. 

\vspace{-0.4cm}
\noindent\\$\star$~In \cite{abbg}, a bound very similar to the second bound of Theorem~\ref{thm:main} was required, for the height of a uniformly random labelled rooted tree of a fixed size. This bound was a key step in establishing the existence of a distributional Gromov--Hausdorff scaling limit for the sequence of rescaled components of a critical Erd\H{o}s--R\'enyi random graph $G_{n,p}$ when $p=p(n)$ is in the critical window $p-1/n=O(n^{-4/3})$. The results of this paper may thus be seen as a step towards establishing that the same scaling limit obtains for the sequence of components of a critical random graph with a given degree sequence \cite{hatami2011comp,joseph2011gr,riordan2011phase}. This is a line of enquiry that we shall pursue in a future paper. 

\vspace{-0.4cm}
\noindent \\$\star$~ The appearance of the term $1_{\bf c}$ in the bound on the height is necessary. 
For example, the sequence ${\bf c}=(1,1,\ldots,1,0)$ corresponds to a unique rooted plane tree, 
of height $n$. (For technical convenience, we exclude this unique, degenerate case from consideration 
for the remainder of the paper. Note that for any {\em other} child sequence ${\bf c}$, we have $|{\bf c}| \ge n$.)
More generally, given {\bf c}, define the {\em one-reduced} sequence ${\bf c}^*$, 
obtained by suppressing all entries of ${\bf c}$ which are equal to one. If 
${\bf c}$ has $k$ entries which are equal to one, then a tree with distribution 
$\cT_{\bf c}$ can then be generated from the tree $\cT_{\bf c^*}$ by repeatedly choosing 
a node $v$ uniformly at random, then subdividing the edge between $v$ and its parent (or, if 
$v$ happens to be the root, then adding a new node above $v$ and rerooting at this new node). 
Under this construction, each edge in $\cT_{\bf c^*}$ is subdivided $k/(n-k)$ times on average, 
%(unless $k=n$, in which case ${\bf c}=(1,1,\ldots,1,0)$ as above), 
and this is precisely the factor encoded 
by $1_{\bf c}$. 

The remainder of the note is devoted to proving Theorem~\ref{thm:main}. We first briefly describe a family of bijective correspondences between rooted plane trees and certain lattice paths; these correspondences allow us to prove bounds for the height and width by studying the fluctuations of a certain martingale. We accomplish this bounding by using a martingale concentration result of McDiarmid \cite{mcdiarmid98concentration}, which appears as Theorem~\ref{mcd}, below. This immediately yields the first bound in Theorem~\ref{thm:main}; the second requires a little further thought, and the use of a negative association result of Dubhashi~\cite{dubhashi1998balls}. Forthwith the details.

\subsection*{The Ulam--Harris tree, breadth-first search, lex-DFS and rev-DFS}

Below is a brief review of some basic connections between rooted plane trees and lattice paths. An 
excellent and detailed reference, with proofs, is \cite{legall2005rta}. 
The {\em Ulam--Harris} tree $\cU$ is the tree with root $\varnothing$ 
whose non-root nodes correspond to finite sequences of positive integers $v_1\ldots v_k$, 
with $v_1\ldots v_k$ having parent $v_1 \ldots v_{k-1}$ and children 
$\{v_1 \ldots v_{k} i~:~i \in \{1,2,\ldots\}\}$. For a node $v$ of $\cU$ we think of $vi$ as the 
$i$'th child of $v$. Any rooted plane tree $T$ in which all nodes have at most countably 
many children can be viewed as 
a subtree of $\cU$ by sending the root of $T$ to the root $\varnothing$ of $\cU$ 
and using the ordering of children in $T$ to recursively define an embedding of $T$ into $\cU$. 

Having viewed $T$ as a subtree of $\cU$, we now define three orderings on the nodes of $T$:
\begin{enumerate}
\item {\em breadth-first search (or BFS) order} lists the nodes of $T$ in increasing order of depth, and 
for nodes of the same depth, in lexicographic order (so, for example, node $2,3$ would appear before $3,1$ but after $1,7$); 
\item {\em lexicographic depth-first search (or lex-DFS) order} lists the nodes of $T$ in lexicographic order; 
\item {\em reverse lexicographic depth-first search (or rev-DFS)} is most easily described informally. 
Let $T^*$ be the mirror-image of $T$, and list the nodes of $T$ in the order they (their mirror images) appear in a lexicographic depth-first search of $T^*$. 
\end{enumerate}
The use of rev-DFS to bound heights of trees was introduced in \cite{ab}. 
%\REM{Picture of a plane tree and a list of the three search orders for this tree}
Each of these orders have the property that when a node $v$ appears in the order, its parent in $T$ has already appeared. 
For such orders, we may define a {\em queue process}, as follows. Given the order $u_1,\ldots,u_n$ of the nodes of $T$, 
Let $Q_0=1$ and, for $1 \leq i \leq n$, let $Q_i=Q_{i-1}-1+c_{u_i}$, where $c_{u_i}$ is the number of children of $u_i$ in $T$. 
Then $Q_i$ is the number of nodes $u$ of the tree whose parent is among $u_1,\ldots,u_i$ but who are not themselves among $u_1,\ldots,u_i$. We will thus always have $Q_i > 0$ for $i < n$ and $Q_n=0$. We write $\{Q_i^b(T)\}_{0=1}^n$ for the queue process on the BFS order of $T$, and likewise define $\{Q_i^l(T)\}_{i=0}^n$ and $\{Q_i^r(T)\}_{0=1}^n$ for the lex-DFS and rev-DFS orders, respectively. 

Given the tree $T$, the preceding three processes are uniquely specified. Conversely, given any of the three sequences 
$\{Q_i^x(T)\}_{i=1}^n$, $x \in b,l,r$, the tree $T$ can be recovered. For each $x \in b,l,r$, this provides a bijection between 
rooted plane trees with $n$ nodes, on the one hand, and child sequences $(c_i)_{i=1}^n$ with $\sum_{1 \le i \le k} (c_i-1) \ge 0$ for all $1 \le k < n$. Call such sequences {\em tree sequences}. 

Given a sequence ${\bf c}=(c_i))_{i=1}^n$, set $S_0=0$ and $S_i=S_i({\bf c})=\sum_{j=1}^i (c_j-1)$ for $i \in [n]$. 
Also, given a permutation $\sigma:[n]\to[n]$, write $\sigma({\bf c})$ for the sequence $(c_{\sigma(i)})_{i=1}^n$. For a given sequence ${\bf c}=(c_1,\ldots,c_n)$ of non-negative integers with sum $n$, there is a unique cyclic permutation $\sigma=\sigma_{\bf c}:[n] \to [n]$ for which the sequence of partial sums $\sigma({\bf c})$ forms a tree sequence. 
(This fact yields a one-line proof of (\ref{moon}), above, by considering the number of permutations leaving ${\bf c}$ unchanged.) 
To be precise, $\sigma$ is the cyclic permutation sending $k$ to $n$, where $k$ is the least index at which the sequence $(S_i({\bf c}))_{i=0}^n$ achieves its minimum overall value.
Fix $x \in \{b,l,r\}$ and write $T^x({\bf c})$ for the tree $T$ corresponding to $\sigma({\bf c})$ under the $x$-bijection. It follows that letting $\tau$ be a uniformly random permutation of $[n]$, the tree $T^x({\bf \tau(c)})$ is a uniformly random tree with child sequence ${\bf c}$. Conversely, if $T$ is a uniformly random tree with child sequence ${\bf c}$, then $(Q_i^{x}(T))_{i=0}^n$ is distributed as $(S_i(\sigma_{\bf C}({\bf C})))_{i=0}^n$, where ${\bf C}=\tau({\bf c})$ and $\tau$ is a uniformly random permutation, independent of ${\bf c}$. 

\subsection*{Extremes in a sequence and its permutations}
In what follows, for positive integers $p,q$, we write $(p)_q = p \mod q$ if $q\nmid p$ and $(p)_q = q$ if $q \mid p$. 
For this section, fix a child sequence ${\bf c}=(c_1,\ldots,c_n)$ and $x \in \{b,l,r\}$, and let $\sigma=\sigma_{\bf c}$. 
Note that $(S_i(\bf c))_{i=0}^n$ has $S_i > 0$ for all $i < n$ precisely if ${\bf c}$ is a tree sequence. 
\begin{lem}\label{halfsplit}
If $\max_{0 \leq i \leq n} S_i(\sigma({\bf c}))= m$ then $\max_{0 \leq i \leq n} |S_i({\bf c})| \geq m/2$. 
\end{lem}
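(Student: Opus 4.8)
The plan is to use the explicit description of the cyclic permutation $\sigma=\sigma_{\bf c}$ to rewrite the partial sums of $\sigma({\bf c})$ directly in terms of those of ${\bf c}$, and then to estimate the maximum by the triangle inequality. Recall that $\sigma$ cuts ${\bf c}$ just after the first index $k$ at which $(S_i({\bf c}))_{i=0}^n$ attains its minimum, so that $\sigma({\bf c}) = (c_{k+1},\ldots,c_n,c_1,\ldots,c_k)$. The first step is to record the resulting identity for partial sums: splitting the sum $\sum_{j=1}^i(\sigma({\bf c})_j-1)$ at the position $j=n-k$ where the rotation ``wraps around'' gives
\[
S_i(\sigma({\bf c})) =
\begin{cases}
S_{k+i}({\bf c}) - S_k({\bf c}), & 0 \le i \le n-k,\\
S_n({\bf c}) - S_k({\bf c}) + S_{i-n+k}({\bf c}), & n-k \le i \le n,
\end{cases}
\]
the two formulas coinciding at $i=n-k$ since $S_0({\bf c})=0$.

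The second step is to record two elementary sign facts. Since $S_0({\bf c})=0$ and $k$ is a minimiser of $(S_i({\bf c}))_{i=0}^n$, we have $S_k({\bf c}) \le 0$, hence $|S_k({\bf c})| = -S_k({\bf c})$; and since ${\bf c}$ is a child sequence, $S_n({\bf c}) = \sum_{j=1}^n(c_j-1) = (n-1)-n = -1 \le 0$, so that $S_n({\bf c}) - S_k({\bf c}) \le -S_k({\bf c}) = |S_k({\bf c})|$.

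Finally, let $i^*$ achieve $m = \max_{0\le i\le n} S_i(\sigma({\bf c}))$, and write $M = \max_{0\le j\le n}|S_j({\bf c})|$. If $i^* \le n-k$, the first case of the identity gives $m = S_{k+i^*}({\bf c}) - S_k({\bf c}) \le |S_{k+i^*}({\bf c})| + |S_k({\bf c})| \le 2M$. If $i^* \ge n-k$, the second case together with the bound $S_n({\bf c})-S_k({\bf c}) \le |S_k({\bf c})|$ gives $m = (S_n({\bf c})-S_k({\bf c})) + S_{i^*-n+k}({\bf c}) \le |S_k({\bf c})| + |S_{i^*-n+k}({\bf c})| \le 2M$. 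Either way $M \ge m/2$, which is the claim. The one point that needs care is the wrap-around term $S_n({\bf c})-S_k({\bf c})$ in the partial-sum identity, together with the observation that the sign fact $S_n({\bf c})\le 0$ is exactly what lets one absorb it and obtain the sharp constant $2$ rather than $3$; everything else is the triangle inequality. (The factor $2$ is genuinely needed: a sequence whose partial sums dip to $-M$ and elsewhere climb to $+M$ has, after the rotation, maximal partial sum close to $2M$.)
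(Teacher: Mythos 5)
Your proof is correct and follows essentially the same route as the paper's: both decompose $S_i(\sigma(\mathbf{c}))$ via the cyclic cut point into a sum of two partial sums of $\mathbf{c}$ and conclude by a triangle-inequality dichotomy according to whether the maximiser falls before or after the wrap-around. The only cosmetic difference is that you invoke the sign facts $S_k(\mathbf{c})\le 0$, $S_n(\mathbf{c})\le 0$ to absorb the wrap term cleanly, whereas the paper phrases the same step as ``either $S_j \le S_n - m/2$ or $S_{j+k-n}\ge m/2$,'' which incidentally makes its argument work for an arbitrary cyclic shift rather than specifically for $\sigma_{\mathbf{c}}$.
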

\begin{proof}
For $x \in [0,2n]$, write $x_n=x$ if $x \leq n$ and $x_n=x-n$ if $x > n$. 
Since $\sigma$ is a cyclic shift, there is $j$ so that $\sigma(i)=(i+j)_n$ for $j \in [n]$. 
Let $k$ be the index at which $S_k(\sigma({\bf c}))=m$, so that 
\[
m= \sum_{i=1}^k (c_{\sigma(i)}-1) = \sum_{i=1}^k (c_{(j+i)_n} - 1). 
\]
If $j+i \leq n$ then $S_{j+i} = S_j+m$ so either $S_j \leq -m/2$ or $S_{j+i} \geq m/2$. 
If $j+i > n$ then $m=(S_n-S_j)+S_{j+i-n}$ so either $S_j \leq S_n-m/2 = -1-m/2$ or $S_{j+i-n} \geq m/2$. 
\end{proof}
Let $\sigma^*$ be the cyclic permutation sending $1$ to $1+\lfloor n/2 \rfloor$. We then immediately have the following corollary.
\begin{cor} \label{quarter}
If $\max_{0 \leq i \leq n} S_i(\sigma({\bf c}))= m$ then either 
\[
\max_{0 \leq i \leq \lfloor n/2 \rfloor} |S_i| \geq m/4
\quad
\mbox{or}
\quad
\max_{0 \leq i \leq \lceil n/2 \rceil} |S_{\sigma^*(i)}| \geq m/4.
\]
\end{cor}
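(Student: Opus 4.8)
The plan is to deduce Corollary~\ref{quarter} directly from Lemma~\ref{halfsplit} by a short case analysis that localises the large partial sum to a block of length at most $\lceil n/2\rceil$. By Lemma~\ref{halfsplit} there is an index $i^*\in\{0,1,\ldots,n\}$ with $|S_{i^*}({\bf c})|\ge m/2$. If $i^*\le\lfloor n/2\rfloor$ then $S_{i^*}$ is itself one of the partial sums appearing in the first maximum, so $\max_{0\le i\le\lfloor n/2\rfloor}|S_i|\ge m/2\ge m/4$ and the first alternative holds. The whole content of the corollary is in handling the remaining case $i^*>\lfloor n/2\rfloor$.

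For that case I would write $S_{i^*}=S_{\lfloor n/2\rfloor}+\bigl(S_{i^*}-S_{\lfloor n/2\rfloor}\bigr)$ and apply the triangle inequality: at least one of $|S_{\lfloor n/2\rfloor}|$ and $|S_{i^*}-S_{\lfloor n/2\rfloor}|$ is at least $m/4$. If it is the first, then $\lfloor n/2\rfloor$ again lies in the range of the first maximum and we are done. If it is the second, the task is to recognise $S_{i^*}-S_{\lfloor n/2\rfloor}=\sum_{j=\lfloor n/2\rfloor+1}^{i^*}(c_j-1)$ as one of the partial sums $S_i(\sigma^*({\bf c}))$, which is what the corollary denotes $S_{\sigma^*(i)}$. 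Since $\sigma^*$ is the cyclic shift $i\mapsto (i+\lfloor n/2\rfloor)_n$, for any $k\le\lceil n/2\rceil$ we have $k+\lfloor n/2\rfloor\le n$, so $\sigma^*(1),\ldots,\sigma^*(k)$ are precisely $\lfloor n/2\rfloor+1,\ldots,\lfloor n/2\rfloor+k$ with no wrap-around, and therefore $S_k(\sigma^*({\bf c}))=S_{\lfloor n/2\rfloor+k}({\bf c})-S_{\lfloor n/2\rfloor}({\bf c})$. Choosing $k=i^*-\lfloor n/2\rfloor$, which lies in $\{1,\ldots,n-\lfloor n/2\rfloor\}=\{1,\ldots,\lceil n/2\rceil\}$, gives $S_k(\sigma^*({\bf c}))=S_{i^*}-S_{\lfloor n/2\rfloor}$, so the second maximum is at least $m/4$ and the second alternative holds. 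Using $\lfloor n/2\rfloor+\lceil n/2\rceil=n$ keeps the argument uniform in the parity of $n$.

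There is no substantive obstacle: the only step needing care is the last one, namely checking that truncating the shifted sequence at $\lceil n/2\rceil$ terms is exactly enough to express $S_{i^*}-S_{\lfloor n/2\rfloor}$ as a genuine partial sum of $\sigma^*({\bf c})$ with no wrap-around (the boundary value $k=\lceil n/2\rceil$, where $\sigma^*(\lceil n/2\rceil)=(n)_n=n$, being the one to double-check), together with the trivial bookkeeping that $i^*-\lfloor n/2\rfloor$ indeed falls in $\{1,\ldots,\lceil n/2\rceil\}$.
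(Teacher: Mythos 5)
Your proof is correct, and it takes the same route as the paper (deduce the corollary from Lemma~\ref{halfsplit}); the paper's own proof is a one-liner that does not spell out the case analysis, so you have simply filled in the details. One remark worth making concerns the notation $S_{\sigma^*(i)}$: read literally as $S_{\sigma^*(i)}({\bf c})$, the corollary is trivial with $m/2$ in place of $m/4$, since $\{0,\dots,\lfloor n/2\rfloor\}$ and $\{\sigma^*(i):1\le i\le\lceil n/2\rceil\}=\{\lfloor n/2\rfloor+1,\dots,n\}$ together exhaust $\{0,\dots,n\}$, so the index at which $|S_i|\ge m/2$ already lies in one of the two ranges. You instead read it as $S_i(\sigma^*({\bf c}))$, i.e.\ a partial sum of the shifted sequence reset to start at zero, and that is the reading under which $m/4$ is actually needed and under which the triangle-inequality decomposition $S_{i^*}=S_{\lfloor n/2\rfloor}+(S_{i^*}-S_{\lfloor n/2\rfloor})$ becomes the key step. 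Your reading is also the one that makes the corollary useful in the spirit of the paper (it mirrors the increment manipulations in the proof of Theorem~\ref{rotategauss}), and your bookkeeping — no wrap-around for $k\le\lceil n/2\rceil$, the boundary case $\sigma^*(\lceil n/2\rceil)=n$, and $i^*-\lfloor n/2\rfloor\in\{1,\dots,\lceil n/2\rceil\}$ — is all correct.
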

\begin{proof}
By \refL{halfsplit}, we have $\max_{0 \leq i \leq n} |S_i({\bf c})| \geq m/2$, 
so one of these two alternatives must occur. 
\end{proof}

\subsection*{Martingales for the queue processes}
We will use a martingale inequality that can be found in \cite{mcdiarmid98concentration}. 
Let $\{X_i\}_{i=0}^n$ be a bounded martingale adapted to a filtration $\{\cF_i\}_{i=0}^n$. 
Let $V=\sum_{i=0}^{n=1} \v{X_{i+1}|\cF_i}$, where 
\[
\v{X_{i+1}|\cF_i} := \E{(X_{i+1} - X_i)^2|\cF_i} = \E{X_{i+1}^2|\cF_i}-X_i^2 
\]
is the predictable quadratic variation of $X_{i+1}$. 
Define 
\[
v=\mathop{\mathrm{ess}}\sup V,
\quad
\mbox{and}
\quad
b = \max_{0 \leq i \leq n-1} \mathop\mathrm{ess}\sup (X_{i+1}-X_i| \cF_i).
\]
Then we have the following bound.
\begin{thm}[\cite{mcdiarmid98concentration}, Theorem 3.15]\label{mcd}
For any $t \geq 0$, 
\[
\p{\max_{0 \leq i \leq n} X_i \geq t} \leq \exp\pran{-\frac{t^2}{2v(1+bt/(3v))}}.
\]
\end{thm}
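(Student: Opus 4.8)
The plan is to run the classical exponential-supermartingale argument underlying the Freedman and Bernstein martingale inequalities; as the statement is quoted from \cite{mcdiarmid98concentration}, I only indicate the route. Assume $X_0=0$ (replacing $X_i$ by $X_i-X_0$ alters neither the increments nor $v$ nor $b$), fix a parameter $\lambda>0$ to be chosen at the end, and note that the boundary cases $v=0$ or $b\in\{0,\infty\}$ are degenerate and make the bound trivial, so suppose $0<v<\infty$ and $0<b<\infty$.

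The first step is a one-step moment estimate. The function $\phi(x)=(e^x-1-x)/x^2$, with $\phi(0)=\tfrac12$, is nondecreasing on $\R$, so $e^u\le 1+u+\phi(c)u^2$ whenever $u\le c$. Applying this with $u=\lambda(X_{i+1}-X_i)$ and $c=\lambda b$ (legitimate since $X_{i+1}-X_i\le b$ a.s. by definition of $b$), taking conditional expectations given $\cF_i$, and using $\E{X_{i+1}-X_i|\cF_i}=0$ together with $1+a\le e^a$, gives
\[
\E{e^{\lambda(X_{i+1}-X_i)}|\cF_i}\ \le\ \exp\pran{\phi(\lambda b)\,\lambda^2\,\v{X_{i+1}|\cF_i}}.
\]
Hence, writing $W_i=\sum_{j=0}^{i-1}\v{X_{j+1}|\cF_j}$ for the predictable (that is, $\cF_{i-1}$-measurable) partial sums of the quadratic-variation process, the sequence $M_i:=\exp\pran{\lambda X_i-\phi(\lambda b)\,\lambda^2\,W_i}$ is a nonnegative supermartingale with $M_0=1$: the factor $\exp(-\phi(\lambda b)\lambda^2\v{X_{i+1}|\cF_i})$ produced by $W_{i+1}-W_i$ exactly absorbs the upper bound on $\E{e^{\lambda(X_{i+1}-X_i)}|\cF_i}$.

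The second step passes to the running maximum. Let $\tau=\min\{i\le n:X_i\ge t\}$, with $\tau=n$ if no such index exists; this is a bounded stopping time, so optional stopping gives $\E{M_\tau}\le M_0=1$. On the event $\{\max_{0\le i\le n}X_i\ge t\}$ one has $X_\tau\ge t$, and since the summands defining $W$ are nonnegative and $\tau\le n$, also $W_\tau\le W_n=V\le v$ almost surely; as $\phi(\lambda b)\ge 0$ this forces $M_\tau\ge \exp(\lambda t-\phi(\lambda b)\lambda^2 v)$ there. Therefore
\[
\p{\max_{0\le i\le n}X_i\ge t}\ \le\ \exp\pran{-\lambda t+\phi(\lambda b)\,\lambda^2 v}.
\]
Finally, using the elementary bound $\phi(x)\le\tfrac12(1-x/3)^{-1}$ valid for $0\le x<3$ (immediate from $(j+2)!\ge 2\cdot 3^{j}$ and the power series $\phi(x)=\sum_{j\ge 0}x^j/(j+2)!$), and then taking $\lambda=t/(v+bt/3)$ — which satisfies $\lambda b<3$ because $v>0$ — the exponent collapses to $-t^2/\bigl(2v(1+bt/(3v))\bigr)$, which is the asserted bound.

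The only genuine subtlety is that the ``variance budget'' $V$ is itself random rather than a deterministic constant, so one cannot simply divide it out of an exponential tail bound; this is exactly why the predictable process $W_i$ is carried inside $M_i$ and the inequality $W_n=V\le v$ is invoked only after the optional-stopping step. Everything else — the monotonicity of $\phi$, the quadratic bound $e^x-1-x\le (x^2/2)/(1-x/3)$ on $[0,3)$, and the single-variable optimization — is routine calculus, and the full argument may be found in \cite[Chapter~3]{mcdiarmid98concentration}.
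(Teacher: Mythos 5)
Your proof is correct, and it takes a genuinely different route from what the paper does. The paper does not prove this inequality at all: it cites \cite[Theorem~3.15]{mcdiarmid98concentration} for the endpoint version $\p{X_n\ge t}\le\exp\bigl(-t^2/(2v(1+bt/(3v)))\bigr)$, and then observes that since McDiarmid's argument bounds $\E{e^{hX_n}}$ and $\{e^{hX_i}\}$ is a submartingale, Doob's maximal inequality upgrades the conclusion to the running maximum. You instead build the full Freedman-style argument from scratch: the one-step exponential moment bound via the monotonicity of $\phi(x)=(e^x-1-x)/x^2$, the exponential supermartingale $M_i=\exp\bigl(\lambda X_i-\phi(\lambda b)\lambda^2 W_i\bigr)$ carrying the predictable quadratic-variation process $W_i$, optional stopping at the first passage of level $t$, and the optimization $\lambda=t/(v+bt/3)$ together with $\phi(x)\le\tfrac12(1-x/3)^{-1}$ on $[0,3)$. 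Both approaches are sound and in fact closely related (the paper's Doob step is, internally, the same $\E{e^{hX_n}}\le e^{\phi(hb)h^2v}$ estimate, reorganized). What your route buys is a self-contained proof that produces the maximal inequality directly, with the role of the random variance budget $V$ made explicit by stopping before invoking $W_\tau\le v$; what the paper's route buys is brevity, since it treats McDiarmid's endpoint bound as a black box and needs only a one-line observation to extend it. The optimization is also verified correctly: with $\lambda=t/(v+bt/3)$ one has $1-\lambda b/3=v/(v+bt/3)$, whence $-\lambda t+\tfrac{\lambda^2v}{2(1-\lambda b/3)}=-t^2/\bigl(2(v+bt/3)\bigr)=-t^2/\bigl(2v(1+bt/(3v))\bigr)$, matching the statement.
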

In \cite{mcdiarmid98concentration}, this result is stated for $\p{X_n \geq t}$ rather than 
for the supremum of the $X_i$ as above. However, as noted by McDiarmid, 
the proof is based on bounding $\E{e^{hX_n}}$ for suitably chosen $h>0$. 
Since $\{e^{hX_i},0 \leq i \leq n\}$ is a submartingale, 
the version for the supremum in fact holds by a simple application of Doob's inequality. 

Now, fix a child sequence ${\bf c}=(c_1,\ldots,c_n)$ and $x \in \{b,l,r\}$. 
Let $\tau:[n]\to[n]$ be a uniformly random permutation, and 
write ${\bf C}=(C_1,\ldots,C_n)=(\tau({\bf c}))$. 
%Let $\sigma=\sigma_{\bf C}$ be the unique cyclic permutation which makes $\sigma({\bf C})$ a tree sequence. 

For $0 \leq k \leq n-1$ let $n_k^0 = \#\{i: C_i=k\} = n_k({\bf c})$. For $i > 0$ and $0 \leq k \leq n-1$, define 
\[
n_k^i=n_k^i({\bf C}) = 	\begin{cases}
						n_k^{i-1}		& \mbox{if}~C_i \neq k, \\
						n_k^{i-1}-1 	& \mbox{if}~C_i = k.
					\end{cases}
\]
Then for all $0 \leq i \leq n$, $\sum_{k=0}^{n-1} n_k^i = n-i$. 
Also, for each $1 \leq i \leq n$, there is a single $k$ with $n_k^i\neq n_k^{i-1}$, and furthermore, for this $k$, 
$S_i({\bf C}) = S_{i-1}({\bf C}) + k - 1$. 
Thus, for all $0 \leq i \leq n$, 
\[
\sum_{k=0}^{n-1} k n_k^{i} + S_i = \sum_{k=0}^{n-1} k n_k^{0} - i = n-1-i.
\] 
Writing $\cF_i$ for the sigma-field generated by $S_0,\ldots,S_i$, 
we then have 
\[
\E{S_{i+1}| \cF_i} = S_i + \sum_{j=0}^{n-1} (j-1) \frac{n_j^i}{n-i} 
= S_i - \frac{S_i+1}{n-i}, 
\]
and
\begin{align*}
\E{S_{i+1}^2|\cF_i}
& = \sum_{k=0}^{n-1} (S_i+(k-1))^2 \frac{n_k^i}{n-i}\\ 
& = S_i^2 - \frac{2S_i(S_i+1)}{n-i} + \frac{\sum_{k=0}^{n-1} (k-1)^2 n_k^i}{n-i}. \\ 
\end{align*} 
At this point it would be natural to turn to the study of the martingale whose value at time $i$ is $S_i+\sum_{j=0}^{i-1} (S_j+1)/(n-j)$, or in other words to subtract off the predictable part. However, this would require us to separately bound the sums of the $(S_j+1)/(n-j)$, and a more direct route is to simply bound these summands directly. 
From the preceding equations, for $i < n$ we have 
\[
\E{\frac{S_{i+1}+1}{n-(i+1)}|\cF_i} = \frac{S_i + 1}{n-(i+1)} - \frac{S_i+1}{(n-i)(n-(i+1))} = \frac{S_i+1}{n-i}.
\]
Here we take $0/0=1$ by convention to deal with the term $i=n-1$. Thus, $M_i = (S_i+1)/(n-i)$ is an $\cF_i$-martingale. 
Since $S_{i+1} \geq S_i-1$ for each $i < n$, for $i < \lfloor n/2 \rfloor$ we have 
\[
M_{i+1}=\frac{S_{i+1}+1}{n-(i+1)}  \geq \frac{S_i+1}{n-(i+1)} - \frac{2}{n} 
					 = \frac{S_i+1}{n-i} - \frac{S_i+1}{(n-i)(n-(i+1))} - \frac{2}{n} 
					  \geq \frac{S_i+1}{n-i} - \frac{4}{n} = M_i-\frac{4}{n},
\]
which we will use below when applying Theorem~\ref{mcd}. 
We also have 
\begin{align*}
\v{M_{i+1}|\cF_i} & = \E{M_{i+1}^2|\cF_i}-M_i^2 \\
					& = \frac{1}{n-(i+1)^2} (\E{S_{i+1}^2|\cF_i} + 2\E{S_{i+1}|\cF_i} + 1) - \pran{\frac{S_i+1}{n-i}}^2\\
					& = \frac{1}{(n-(i+1))^2}
					\pran{3 + \frac{\sum_{k=0}^{n-1} (k-1)^2 n_k^i}{n-i}} - \pran{\frac{S_i+1}{(n-i)(n-(i+1))}}^2\\
					& \leq \frac{1}{(n-(i+1))^2} \pran{3 + \frac{\sum_{i=1}^n c_i^2}{n-i}}.
\end{align*}
Writing $a=\sum_{i=1}^n c_i^2/n$, for $i < \lfloor n/2 \rfloor$ we obtain the bound
\[
\v{M_{i+1}|\cF_i} \leq \frac{4(3+2a)}{n^2}, 
\]
and so 
\[
\sum_{i=1}^{\lfloor n/2\rfloor}\v{M_{i}|\cF_{i-1}} \leq \frac{4(3+2a)}{n}.
\]
It follows by applying \refT{mcd} to $\{-M_i\}_{i=0}^{\lfloor n/2 \rfloor}$ that for all $t \geq 0$, 
\begin{align}
\p{\min_{0 \leq i \leq \lfloor n/2 \rfloor} S_i \leq -(t+1)} & \leq \p{\min_{0 \leq i \leq \lfloor n/2 \rfloor} \frac{S_i+1}{n-i} \leq -\frac{t}{n}}\nonumber\\
& \leq \exp\pran{-\frac{t^2}{n\cdot8(3+2a)(1+t/(3(3+2a)n))}} \nonumber\\
& = \exp\pran{-\frac{t^2}{8(3+2a)n + 8t/3}}.\label{halfway}
\end{align}

Recall that $\sigma_{\bf c}$ is the unique cyclic permutation $\sigma$ which makes $\sigma({\bf c})$ a tree sequence.
We are now prepared for our principal bound on the fluctuations of $\{S_i(\sigma({\bf c})),0 \leq i \leq n\}$. 
\begin{thm}\label{rotategauss} 
For any non-negative integer $m$, 
\[
\p{\max_{0 \leq i \leq n} S_i(\sigma_{\bf c}({\bf c})) \geq m+2} \leq 3\exp\pran{-\frac{m^2}{368 |{\bf c}|^2}}. 
\] 
\end{thm}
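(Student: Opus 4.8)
The plan is to transfer the problem, via the machinery already in place, to a \emph{one-sided} deviation estimate for the partial sums of a uniformly random permutation of ${\bf c}$ over an initial segment of length $\lfloor n/2\rfloor$ --- precisely the situation handled by~(\ref{halfway}). Write ${\bf C}$ for a uniformly random permutation of ${\bf c}$ (so that the probability in the statement is over ${\bf C}$). First I would note that $\max_{0\le i\le n}S_i(\sigma_{\bf C}({\bf C}))$ is at most the range $\max_{0\le k\le n}S_k({\bf C})-\min_{0\le k\le n}S_k({\bf C})$ of the un-rotated walk (essentially the content of \refL{halfsplit}), so that $\{\max_i S_i(\sigma_{\bf C}({\bf C}))\ge m+2\}$ is contained in $\{\max_k S_k({\bf C})\ge (m+2)/2\}\cup\{\min_k S_k({\bf C})\le -(m+2)/2\}$. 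Next I would reduce each of these full-range events to first-half events using the two symmetries that leave the law of ${\bf C}$ invariant: reversing ${\bf C}$ swaps the minimum (resp.\ maximum) of $S$ over the second half $[\lfloor n/2\rfloor,n]$ with the maximum (resp.\ minimum) over the first half of the reversed sequence, up to the additive shift coming from $S_n=-1$; and a cyclic shift of ${\bf C}$ by $\lfloor n/2\rfloor$ identifies the partial sums over $[\lfloor n/2\rfloor,n]$ with those over $[0,\lceil n/2\rceil]$ of the shifted sequence, up to a single ``midpoint'' term $S_{\lfloor n/2\rfloor}$, which is itself dominated by the first-half fluctuation. Carrying this out (this is essentially \refC{quarter}, re-derived in the form convenient here), the event in question is contained in a union of boundedly many --- and, with the bookkeeping done efficiently and using that the claimed bound is trivial when $m$ is below a fixed multiple of $|{\bf c}|$, three of them --- events of one of the two shapes
\[
\Bigl\{\min_{0\le i\le \lceil n/2\rceil}S_i({\bf D}) \le -s\Bigr\} \qquad\text{or}\qquad \Bigl\{\max_{0\le i\le \lceil n/2\rceil}S_i({\bf D}) \ge s\Bigr\},
\]
where ${\bf D}$ is again a uniformly random permutation of ${\bf c}$ and $s$ is a fixed constant multiple of $m$ (about $m/4$).

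The lower-deviation events are exactly what~(\ref{halfway}) controls. The upper-deviation events $\{\max_{0\le i\le\lceil n/2\rceil}S_i({\bf D})\ge s\}$ are the delicate point, and I expect them to be the main obstacle. One cannot simply feed the martingale $\{M_i\}$ into \refT{mcd} for these: unlike the downward increments, which obey $M_{i+1}\ge M_i-4/n$ because $S$ drops by at most $1$ per step, the upward increments of $M_i$ are only bounded by something of order $|{\bf c}|/n$ (a single value $C_{i+1}$ as large as $|{\bf c}|$ produces such an increment), so the parameter $b$ in \refT{mcd} is of order $|{\bf c}|/n$ rather than $O(1/n)$, and the resulting estimate degenerates from Gaussian to merely exponential in $s$ once $s\gg |{\bf c}|$ --- far too weak, since $m$ (hence $s$) can be of order $n$ while $|{\bf c}|$ is only of order $\sqrt n$. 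I see two ways around this, either of which I would pursue: (a) apply the reversal/shift symmetry once more, rewriting $\{\max_{0\le i\le\lceil n/2\rceil}S_i({\bf D})\ge s\}$ as $\{\min_{\lfloor n/2\rfloor\le l\le n}S_l({\bf D}')\le -s-1\}$ and then, via $\min_{\lfloor n/2\rfloor\le l\le n}S_l({\bf D}') = S_{\lfloor n/2\rfloor}({\bf D}') + \min_{0\le j\le\lceil n/2\rceil}S_j(\sigma^*({\bf D}'))$, splitting it into two first-half lower-deviation events at level about $s/2$, each handled by~(\ref{halfway}); or (b) truncate: with a threshold $L$ of order $|{\bf c}|^2/s$, the truncated walk $S_i^{(L)}:=\sum_{j\le i}(\min(C_j,L)-1)$ differs from $S_i$ by at most $\sum_j(c_j-L)^+\le |{\bf c}|^2/L$, while $\bigl\{(S_i^{(L)}+n-\sum_j\min(c_j,L))/(n-i)\bigr\}_i$ is a martingale whose upward increments are of order $L/n$ and whose predictable quadratic variation is bounded just as for $M_i$, so that \refT{mcd} now does deliver the sub-Gaussian bound.

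Finally I would assemble the pieces by a union bound and simplify the exponent. The denominator $8(3+2a)n+O(s)$ appearing in~(\ref{halfway}) is $O(|{\bf c}|^2)$: we have $n\le |{\bf c}|^2$ for every non-degenerate child sequence, since $|{\bf c}|^2=\sum_i c_i^2\ge\sum_i c_i=n-1$ with equality only in the excluded path case; and the whole statement is vacuous once $m\ge n$, because a partial sum of a tree sequence never exceeds $n-1$, so $s=O(m)=O(n)=O(|{\bf c}|^2)$ as well. This yields a bound of the shape $C\exp(-c\,m^2/|{\bf c}|^2)$. The only remaining work --- and the reason the exponent constant is precisely $368$ rather than something larger --- is to carry the constants through the reductions efficiently (a sharper estimate of $\sum_i\v{M_i\mid\cF_{i-1}}$ than the crude $4(3+2a)/n$, via summing $1/(n-i)^2$ and $1/(n-i)^3$, helps here); the precise value of the prefactor $3$ is immaterial, since for $m$ below a fixed multiple of $|{\bf c}|$ the right-hand side already exceeds $1$, and for larger $m$ the Gaussian decay easily absorbs any finite multiplicity picked up along the way.
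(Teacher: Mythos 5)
Your overall reading of the problem is right: the only tool available---Theorem~\ref{mcd} applied to the martingale $M_i=(S_i+1)/(n-i)$---controls \emph{lower} deviations well because $S$ can only decrease by $1$ per step, but controls upper deviations poorly because a single large value $C_{i+1}\approx|{\bf c}|$ gives an upward increment of $M$ of order $|{\bf c}|/n$, so that the parameter $b$ ruins the estimate when the level $s$ exceeds $|{\bf c}|$. You correctly realize one must therefore route \emph{all} events through downward excursions, and your workaround (a), converting a max-event on one half into two min-events at half the level via reversal and the shift $\sigma^*$, is a valid way to do that (option (b), truncation, would also work but isn't needed).

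The gap is quantitative and comes from your decomposition, not from the martingale part. Going $\{\max_i S_i(\sigma({\bf C}))\ge m+2\}\to\{\max_k|S_k|\ge m/2\}\to$ (\refC{quarter}) $\to$ half-segment events at level $\approx m/4\to$ (reversal/shift) $\to$ min-events at level $\approx m/8$ produces about six lower-deviation events at level $\approx m/8$, giving roughly $6\exp(-m^2/(c|{\bf c}|^2))$ with $c$ on the order of $2000$–$3000$. Since the theorem is non-vacuous only when $m\gtrsim 20|{\bf c}|$, and on that range a bound with a smaller exponent constant simply does \emph{not} imply the bound with constant $368$, the "absorb it into the exponent" remark does not rescue the argument; the inefficiency is in the splitting, and sharpening the bound on $\sum_i\v{M_i\mid\cF_{i-1}}$ gains at most a factor of $2$. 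The paper does not go through $\max|S_k|\ge m/2$ at all. Instead it notes that $\max_iS_i(\sigma({\bf C}))=m+2$ forces $\max_kS_k({\bf C})-\min_kS_k({\bf C})=m+3$, and then exploits the anchor $S_n=-1$ to argue directly that a downward excursion of magnitude at least $(m+3)/3$ must occur in one of exactly three windows (the first half forwards, the first half backwards, or the second half, depending on where the overall max and min sit relative to $\lfloor n/2\rfloor$). This gives three events at level $\approx m/3$ rather than six at level $\approx m/8$, which is what makes $368$ attainable. That three-way decomposition of the range event, conditioning on which half of $[0,n]$ carries the max and the min, is the ingredient your proposal is missing.
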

\begin{proof}
First, since $\sigma_{\bf c}({\bf c}) = \sigma_{\bf C}({\bf C})$, it suffices to bound 
$\p{\max_{0 \leq i \leq n} S_i(\sigma({\bf C})) \geq m+2}$, which is what we shall do. 
Also, for $m \ge n-3$ the event under consideration can never occur, so we may and shall assume $m < n-3$. 
Finally, for this proof, by $S_i$ we mean $S_i({\bf C})$ unless an argument is provided. 

First note that if $\max_{0 \leq i \leq n} S_i(\sigma({\bf C})) = m+2$, then 
\begin{equation}\label{mplusone}
\max_{0 \leq i \leq n} S_i-\min_{0 \leq i \leq n} S_i = m+3. 
\end{equation}
(In fact, the same must hold for any cyclic permutation of ${\bf C}$.) 
This will imply that at some point, $\{S_i,0 \leq i \leq n\}$ drops in value significantly. 
Let $m_0 = \max_{0 \leq i \leq \lfloor n/2 \rfloor} S_i$, and consider the following two events. 
\begin{itemize}
\item[(a)] $\min_{0 \leq i \leq \lfloor n/2 \rfloor} S_i \leq -(m+3)/3$
\item[(b)] $S_{\lfloor n/2 \rfloor} \leq m_0 -(m+3)/3$. 
\end{itemize}
If (a) does not occur then 
$\{S_0,S_1,\ldots,S_{\lfloor n/2 \rfloor}\} \subset (-(m+3)/3,m_0)$. 
Thus, if neither (a) nor (b) occur then for (\ref{mplusone}) to hold one of the following must take place.
\begin{itemize}
\item[(c)] $m_0 > 2(m+1)/3$,
\item[(d)] $\max_{\lfloor n/2 \rfloor < i \leq n} S_i > 2(m+3)/3$,
\item[(e)] $\min_{\lfloor n/2 \rfloor < i \leq n} S_i < m_0-(m+3)$. 
\end{itemize}
If (b) does not occur but (c) occurs then $S_n-S_{\lfloor n/2 \rfloor} < -(m+3)/3$. If (d) occurs 
then since $S_n=-1$, $S_n - \max_{\lfloor n/2 \rfloor < i \leq n} S_i < -2(m+3)/3$. 

Now note that if either (a) or (b) occurs then 
\[
\min_{0 \leq i \leq \lfloor n/2 \rfloor} (S_{\lfloor n/2 \rfloor} - S_{\lfloor n/2 \rfloor - i}) \leq -(m+3)/3,
\]
and if (b) does not occur but one of (c),(d) does then $S_n - \max_{\lfloor n/2 \rfloor < i \leq n} S_i < -(m+3)/3$, and so 
\[
\min_{0 \leq i \leq \lceil n/2 \rceil} (S_n-S_{n-i}) < -(m+3)/3. 
\]
Finally, if (b) does not occur but (e) occurs then since $S_{\lfloor n/2\rfloor} > m_0-(m+3)/3$, 
we have 
\[
\min_{\lfloor n/2 \rfloor \leq i \leq n} (S_i-S_{\lfloor n/2 \rfloor}) < -2(m+3)/3.
\]
Since $(S_{\lfloor n/2 \rfloor} - S_{\lfloor n/2 \rfloor - i},0 \leq i \leq \lfloor n/2 \rfloor)$ 
has the same distribution as $(S_i,0 \leq i \leq \lfloor n/2\rfloor)$, and 
$(S_i-S_{\lfloor n/2\rfloor}, \lfloor n/2 \rfloor \leq i \leq n)$ and 
$(S_n-S_{n-i},0 \leq i \leq \lceil n/2 \rceil)$ both have the same distribution as $(S_i,0 \leq i \leq \lceil n/2 \rceil)$, 
it follows that 
\begin{align*}
\p{\max_{0 \leq i \leq n} S_i(\sigma_{\bf C}({\bf C})) \geq m+2} 
	& \leq 3\p{\min_{0 \leq i \leq \lfloor n/2 \rfloor} S_i \leq -(m/3+1)}\\
	& \leq 3\exp\pran{-\frac{m^2}{72(3+2a) n+8m}},
\end{align*}
the latter bound holding by (\ref{halfway}). Since $m < n-3$, we have $72\cdot 3n+8 m < 224 n \le 224 |c|^2$. Also, $72\cdot2an=144 |c|^2$, and the result follows. 
\end{proof}

\subsection*{Bounding the width and the height} 

The bounds of Theorem~\ref{thm:main} follow straightforwardly from Theorem~\ref{rotategauss}. Let $\mathcal{T}_{\bf c}$ be a uniformly random tree with child sequence ${\bf c}$. As noted earlier, $(Q^b_i(\mathcal{T}_{\bf c}))_{i=0}^n$ is distributed as $(S_i(\sigma_{\bf C}({\bf C})))_{i=0}^n$, where ${\bf C}=\tau({\bf c})$ and $\tau$ is a uniformly random permutation, independent of ${\bf c}$. 
Furthermore, when the breadth-first exploration has just finished exploring all the nodes at depth $k$, the queue length is precisely the number of nodes at depth $k-1$. It follows by Theorem~\ref{rotategauss} that 
\[
\p{w(\mathcal{T}_c) \ge m+2} \le \p{\max_{0 \le i \le n} Q^b_i(\mathcal{T}_{\bf c}) \ge m+2} \le 3\exp\pran{-\frac{m^2}{368|{\bf c}|^2}},
\]
proving the bound for the width. (Also, if at some point the queue length is at least $m$ then $w(\cT_{\bf c}) \ge m/2$, from which the optimality of the with bound when $|{\bf c}|^2=O(n)$ follows straightforwardly.)

In bounding the height, we assume that $m \ge 6\sqrt{n}$, or else the bound follows trivially since $|{\bf c}|^2 \ge n$. 
First suppose that ${\bf c}$ is one-reduced (so has no entries equal to one). For any node $u \in T$, let $\lambda(u)$ (resp.~$\rho(u)$) be the index of $u$ when the nodes of $T$ are listed in lex-DFS order (resp.~rev-DFS order). Since ${\bf c}$ is one-reduced, each ancestor of $u$ in $T$ has at least one child that is not an ancestor of $u$, and so 
either $Q^l_{\lambda(u)}(T)$ or $Q^r_{\rho(u)}(T)$ is at least $|u|/2$. It follows that when ${\bf c}$ is one-reduced, 
\begin{align*}
\p{h(\mathcal{T}_c) \ge m+4 } & \le \p{\max_{0 \le i \le n} Q^l_i(\mathcal{T}_{\bf c}) \ge \lceil m/2\rceil+2} + \p{\max_{0 \le i \le n} Q^r_i(\mathcal{T}_{\bf c}) \ge \lceil m/2\rceil+2} \\
		& \le 6\exp\pran{-\frac{m^2}{1472|{\bf c}|^2}},
\end{align*}
proving the bound in this case. 

%More generally, write ${\bf c^*}$ for the one-reduced version of ${\bf c}$, obtained from ${\bf c}$ by removing all entries that are equal to one, and let $n^*$ be the length (number of elements of ${\bf c^*}$. Also, write $\mathcal{T}^*$ for the tree obtained from $\mathcal{T}_{\bf c}$ by replacing each maximal path whose internal nodes all have exactly one child, by a single edge. List the edges of $\mathcal{T}^*$ according to some fixed rule as $(e_1,\ldots,e_{n^*-1})$. Each edge $e_i$ corresponds to some path in $\mathcal{T}_c$, and we write $s_i$ for the number of edges of this path (i.e. the total number of nodes, minus two). Then $\mathcal{T}^*$ is distributed as a uniformly random tree with child sequence ${\bf c^*}$, and, independently of $\mathcal{T}^*$, $(s_1,\ldots,s_{n^*-1})$ is a uniformly random element of the set of vectors of positive integers of length $n^*-1$ with sum $n-1$. 

More generally, write ${\bf c^*}$ for the one-reduced version of ${\bf c}$, obtained from ${\bf c}$ by removing all entries that are equal to one, and let $n^*$ be the length (number of elements) of ${\bf c^*}$. Also, write $\mathcal{T}^*$ for the tree obtained from $\mathcal{T}_{\bf c}$ by replacing each maximal path whose internal nodes all have exactly one child, by a single edge. List the edges of $\mathcal{T}^*$ according to some fixed rule as $(e_1,\ldots,e_{n^*-1})$. Note that we always have $n^* \le n-1$. Each edge $e_i$ corresponds to some path in $\mathcal{T}_c$, and we write $s_i$ for the number of internal nodes of this path (i.e. the total number of nodes, minus two). Then $\mathcal{T}^*$ is distributed as a uniformly random tree with child sequence ${\bf c^*}$, and, independently of $\mathcal{T}^*$, $(s_1,\ldots,s_{n^*-1})$ is a uniformly random element of the set of vectors of non-negative integers of length $n^*-1$ with sum $n-n^*-1$. 
From Theorem~\ref{rotategauss} we thus have 
\begin{equation}\label{eq:reduced_bd}
\p{h(\mathcal{T}^*) \ge m+4} \le 6\exp\pran{-\frac{m^2}{1472|{\bf c^*}|^2}} \le 6\exp\pran{-\frac{m^2}{1472|{\bf c}|^2}}. 
\end{equation}
If $n^* \ge n-\sqrt{n}$ then $h(\mathcal{T}_{\bf c}) \le h(\mathcal{T}^*)+\sqrt{n}$, and in this case the required bound follows (recall that we have shown we may assume $m \ge 6\sqrt{n}$). In what follows we thus assume $n^* < n-\sqrt{n}$. 

By Proposition 5 of \cite{dubhashi1998balls}, the entries of $(s_1,\ldots,s_{n^*-1})$ are negatively correlated and thus standard Chernoff bounds apply to any restricted sum of elements of $(s_1,\ldots,s_{n^*-1})$. In particular, for any node $v$ of $\mathcal{T}^*$, 
\[
J_v = \{i : e_i \text{ is an edge of the path from $v$ to the root of $\mathcal{T}^*$}\}. 
\]
We always have $J_v \le h(\mathcal{T^*})-1$, and thus by a Chernoff bound (e.g., \cite{mcdiarmid98concentration}, Theorem 2.2), 
\[
\p{\left.S_{J_v} \ge (1+x)(m+2)\frac{n-n^*-1}{n^*-1} \right| h(\mathcal{T^*}) \le m+3} \le \exp\pran{-2x^2(m+2)^2 \frac{n-n^*-1}{n^*-1}}. 
\]
To get a clean final bound, we choose $x$ so that $(1+x)(m+2)=2m$. It then follows by a union bound that 
\begin{align*}
	& \p{\exists v \in \mathcal{T}^*: \left.S_{J_v} \ge 2m\frac{n-n^*-1}{n^*-1} \right| h(\mathcal{T^*}) \le m+3} \\
\le 	& \exp\pran{\log(n^*-1) - 2(m-2)^2 \frac{n-n^*-1}{n^*-1}} \\
\le 	& \exp\pran{- (m-2)^2 \frac{n-n^*-1}{n^*-1}} \\
\le	 & \exp\pran{- \frac{m^2 (n-n^*-1)}{9|{\bf c}|^2}} 
\end{align*}
the second inequality holding since $(m-2)^2 \ge n^*-1$ and $n-n^*-1 \ge \sqrt{n}-1 \ge \log(n^*-1)$, and the third holding since 
$|{\bf c}|^2 \ge n > n^*-1$ and $(m-2) \ge m/3$. Since $m+4 \le 2m = 2m(n^*-1)/(n^*-1)$, it then follows from (\ref{eq:reduced_bd}) that 
\[
\p{h(\mathcal{T}_{\bf c}) \ge 4m\frac{n-2}{n^*-1}} \le 7 \exp\pran{-\frac{m^2}{1472|{\bf c}|^2}}.
\]
But $(n-2)/(n^*-1) = 1_{\bf c}$, and the result follows. 
              
%\section{For other paper}
%\input{extra.tex}
   
%\section{Conclusion}\label{sec:conc}
%\input{conclusion.tex}
%
%\section{Acknowledgements}

\bibliographystyle{plainnat}
\bibliography{hds}
                 
%\appendix
%\input{appendix.tex}

\end{document}